\documentclass{amsart}
\usepackage{amssymb,amsmath,amsthm,amsfonts}
\usepackage{framed,comment,enumerate}
\usepackage[pdftex]{graphicx}
\usepackage{epstopdf}
\usepackage{xcolor, framed}
\usepackage{color}
\def\R {\mathbb{R}}
\def\eps{\varepsilon}
\def\MA {Monge-Amp\`ere }
\def\Shu{S_h[u]}

\newtheorem{prop}{Proposition}[section]
\newtheorem{thm}{Theorem}[section]

\newtheorem{lem}{Lemma}[section]
\theoremstyle{definition}
\newtheorem{defi}{Definition}[section]

\numberwithin{equation}{section}

\title{Global $W^{2,1+\eps}$ estimates for Monge-Amp\`ere equation with natural boundary condition} 

\author{Ovidiu Savin}
\address{Department of Mathematics,	Columbia University, New York, USA}
\email{savin@math.columbia.edu}

\author{Hui Yu}
\address{Department of Mathematics,	Columbia University, New York, USA}
\email{ huiyu@math.columbia.edu}
\thanks{O.~S.~is supported by  NSF grant DMS-1500438.}

\begin{document}

\begin{abstract}
For the \MA equation with a right-hand side bounded away from $0$ and infinity, we show that the solution,  subject to the natural boundary condition arising in optimal transport,  is in $W^{2,1+\eps}$ up to the boundary. 
\end{abstract}

\maketitle

\section{Introduction}
Let $\Omega$ and $\Omega^*$ be two bounded convex domains in $\R^d$, and $f$ be a function on $\Omega$ satisfying \begin{equation}\label{fBound}\frac{1}{\Lambda}\le f\le\Lambda\end{equation}for some positive constant $\Lambda$ . In this work, we study the regularity of convex Alexandrov solutions to the following problem 
\begin{equation}\label{BVP}
\begin{cases}\det(D^2u)=f &\text{ in $\Omega$,}\\
\nabla u(\Omega)=\Omega^*.&
\end{cases}
\end{equation}
For the definition of Alexandrov solutions to the \MA equation, the reader can consult Figalli \cite{F1} or Giut\'errez \cite{G}. Here we point out that this is the natural boundary value problem arising from the study of the theory of optimal transport.  

To be precise, suppose $\nu$ and $\nu^*$ are two probability measures supported on $\Omega$ and $\Omega^*$ with density functions $g$ and $g^*$ respectively, then \eqref{BVP} is satisfied by the potential of the optimal transport that pushes-forward $\nu=gdx$ to $\nu^*=g^*dx$ \cite{B,V}. In this case, the right-hand side is $f=\frac{g}{g^*\circ\nabla u}.$

When $f$ is continuous, the regularity of solutions to \eqref{BVP} has been studied extensively. Caffarelli showed that $u$ is locally in $W^{2,p}$ in the interior of $\Omega$ for all $p>0$ \cite{C1}.  If $f$ is further assumed to be H\"older continuous, Caffarelli showed that $D^2u$ is H\"older continuous in the interior of $\Omega$.
When the domains are $C^{1,1}$, Chen-Liu-Wang \cite{CLW} proved that these estimates hold up to the boundary of $\Omega$, based on earlier results by Caffarelli \cite{C3} and Urbas \cite{U}. In two dimensions, we recently established the optimal global $W^{2,p}$ estimate without any regularity assumptions on the domains except their convexity \cite{SY}. Still in two dimension, if the domains are assumed to be $C^{1,\alpha}$, $D^2u$ is shown to be H\"older continuous by Chen-Liu-Wang \cite{CLW2}.

For several important applications, however, it is necessary to understand the regularity of $u$ when $f$ fails to be continuous. In the optimal transport problem described above, $f$ does not enjoy any regularity if the density functions $g$ and $g^*$ are only assumed to be bounded away from $0$ and infinity. This problem also has deep implications in the study of semi-geomstrophic equations \cite{F2}.

When $f$ satisfies \eqref{fBound} but is allowed to be  discontinuous, much less is known about the regularity of $u$. Caffarelli showed that $u$ is $C^{1,\alpha_0}$ up to the boundary for some small dimensional $\alpha_0$ \cite{C2}. In terms of Sobolev regularity, Wang \cite{W} showed that for any $p>1$, one can find sufficiently large $\Lambda$ such that $u$ fails to be in $W^{2,p}$ even in the interior of the domain.  Nevertheless, for fixed $\Lambda$, De Philippis-Figalli \cite{DF} was able to show that $u$ is in $W^{2,1}$ in the interior of $\Omega$. This was later improved to an interior $W^{2,1+\eps}$-estimate  independently by De Philippis-Figalli-Savin \cite{DFS} and Schmidt \cite{Sch}.

In this work, we extend this interior $W^{2,1+\eps}$-estimate up to the boundary.  To be precise, our main result is

\begin{thm}\label{Result}
Suppose $\Omega$ and $\Omega^*$ are bounded convex domains in $\R^d.$ Let $u$ be an Alexandrov solution to \eqref{BVP} with $\frac{1}{\Lambda}\le f\le\Lambda$ for some positive constant $\Lambda.$

Then there are positive constants $\eps$, depending only on $d$ and $\Lambda$,  and $C$, further depending on the inner and outer radii of $\Omega$ and $\Omega^*$,  such that \begin{equation*}
\int_\Omega |D^2u|^{1+\eps}\le C. 
\end{equation*} 
\end{thm}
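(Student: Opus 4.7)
The plan is to split the global $W^{2,1+\eps}$ bound into an interior part, which is already provided by the estimates of \cite{DFS,Sch}, and a boundary layer part obtained by covering a neighborhood of $\partial\Omega$ with suitably normalized boundary sections. Throughout, the symmetry between $u$ and its Legendre transform $u^*$ is handy: $u^*$ is an Alexandrov solution of a \MA problem of the same form on $\Omega^*$ with right hand side $1/(f\circ\nabla u^*)\in[1/\Lambda,\Lambda]$ and image domain $\Omega$, so any statement about boundary behavior on one side can be dualized into a statement on the other.

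For a point $x_0\in\partial\Omega$, I would study the sections $\Shu(x_0)=\{u<\ell_{x_0}+h\}$, where $\ell_{x_0}$ is a supporting affine function associated to some $p_0\in\partial u(x_0)\cap\partial\Omega^*$; the existence of such $p_0$ comes from the natural boundary condition $\nabla u(\Omega)=\Omega^*$ together with convexity. A Caffarelli/Savin-type boundary normalization should give an affine map $T_h$, with $|\det T_h|\sim h^{d/2}$, such that $T_h\Shu(x_0)$ is trapped between two balls of comparable radius inside a half-space, and the rescaled potential $\tilde u$ solves a \MA equation with $\tilde f\in[1/\Lambda,\Lambda]$ on a set of normalized geometry. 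Here the inner and outer radii of $\Omega$ and $\Omega^*$ enter to guarantee that the normalization does not degenerate as $x_0$ varies along $\partial\Omega$.

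Once such a normalization is in force, the interior $W^{2,1+\eps}$ estimate of \cite{DFS,Sch} can be applied to $\tilde u$ on a subsection compactly contained in $T_h\Shu(x_0)$. Scaling back, the change of variables by $T_h$ converts $\int_{\Shu(x_0)}|D^2u|^{1+\eps}$ into a controlled multiple of $h^{-\eps}|\Shu(x_0)|$. A Vitali-type cover of a boundary strip by such sections --- carried out with respect to the affine geometry induced by $u$ rather than Euclidean balls --- combined with summation over dyadic scales $h=2^{-k}$ then yields the boundary-layer integrability with a geometric series that converges because $\eps$ is small. Patching this with the interior bound completes the proof.

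The principal difficulty is the boundary analysis without any smoothness hypothesis on $\partial\Omega$ or $\partial\Omega^*$: the natural boundary condition is the only available mechanism preventing the sections $\Shu(x_0)$ from elongating or flattening uncontrollably, and one must extract quantitative geometric control from the inclusion $\partial u(x_0)\cap\partial\Omega^*\neq\emptyset$ alone, upgrading Caffarelli's qualitative strict convexity to a uniform boundary normalization. Ensuring that the resulting $\eps>0$ is uniform across $\partial\Omega$, and organizing the affine Vitali cover so that bounded overlap survives the $h^{-\eps}$ loss, is where the bulk of the technical work should lie.
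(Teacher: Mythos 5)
Your proposal takes a genuinely different route from the paper, but it contains a gap that I do not see how to close with the tools you invoke.

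The paper's proof makes no use of boundary sections at points of $\partial\Omega$, no boundary normalization, and no decomposition into interior plus boundary strip. Instead it works entirely with \emph{centered} sections $S_h[u](x_0)$ at \emph{interior} points $x_0\in\Omega$, extends $u$ affinely outside $\Omega$, and allows these sections to cross $\partial\Omega$. The heart of the argument is an ABP estimate (Lemma \ref{ABP}) showing that a definite fraction of $S_h[u](x_0)\cap\Omega$ lies in a ``good set'' $\{C_0^{-1}M_E\le D^2u\le C_0M_E\}$, where $M_E$ is the matrix of the John ellipsoid of $S_h[u](x_0)\cap\Omega$ (not of the full section). The crucial step that lets the ABP argument survive the cut by $\partial\Omega$ is the observation that a contact point $\bar x$ of the convex envelope outside $\Omega$ can be slid along a segment on which $u$ is affine to a contact point $\bar y\in\Omega$ with the same gradient, so that $\nabla\Gamma_w(Z_{\eta_0}\cap\{\Gamma_w=w\})$ is already captured inside $\Omega$. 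This requires only that $u$ is the affine extension of the interior solution --- no regularity of $\partial\Omega$ or $\partial\Omega^*$ is used. The $W^{2,1+\eps}$ bound then comes from a De Giorgi-type level-set decay $\int_{D_{m+1}}|D^2u|\le(1-\tau)\int_{D_m}|D^2u|$, driven by a Besicovitch covering by sections and the fact that the good set lives in $D_m\setminus D_{m+1}$.

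The gap in your proposal is precisely the item you flag as ``the principal difficulty'': a Caffarelli--Savin-type boundary normalization with $|\det T_h|\sim h^{d/2}$ and $T_h\Shu(x_0)$ trapped between comparable half-balls is \emph{not} available here. Those localization theorems require quantitative regularity of $\partial\Omega$ and of the boundary data (e.g.\ $C^{1,1}$ domains, as in \cite{CLW}, \cite{C3}); the present theorem assumes only convexity, and the paper explicitly points this out as the source of difficulty. Without that normalization your scheme cannot start: boundary sections can be arbitrarily eccentric, and the interior estimate applied to $\tilde u$ on a set compactly contained in $T_h\Shu(x_0)$ both misses a strip around $\partial\Omega$ at every scale (so a Whitney-type rather than Vitali-type cover is needed and must be organized) and fails to transfer to a bound on $\int|D^2u|^{1+\eps}$, since $|D^2u|$ does not scale like $h^{-1}|D^2\tilde u|$ under anisotropic affine maps; the error is a power of the eccentricity of $T_h$, which is exactly what you cannot control. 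The duality with $u^*$ does not help since the same normalization problem reappears on $\Omega^*$. In short, your plan asks the boundary normalization to do the work, while the paper's plan is engineered specifically to avoid ever needing it.
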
 

The exponent $1+\eps$ is optimal due to the examples of Wang \cite{W}. Also, the result is sharp in the sense that the estimate has to depend on $d, \Lambda$  and the inner and outer radii of $\Omega$ and $\Omega^*.$ 

We'd like to point out  that no regularity of $\Omega$ and $\Omega^*$ is assumed. In this case, it remains an interesting problem whether a global $W^{2,p}$-estimate can be established in the spirit of \cite{SY}.

This paper is structured as follows: in Section 2, we introduce some notations and collect some useful preliminary results. In Section 3, we give estimates in the normalized picture. The scaled versions of these estimates are applied to our solution $u$ in Section 4. In the last section we give the proof Theorem \ref{Result}.

\section{Preliminaries}
\subsection{Extension of the solution to $\R^d$.}
Let $u$ be an Alexandrov solution to \eqref{BVP}, we can extend it to the entire $\R^d$ by $$x\in\R^d\mapsto\sup_{y\in\Omega}(u(y)+\nabla u(y)\cdot (x-y)).$$ The resulting function, still denoted by $u$, is a convex function solving the following equation  in the Alexandrov sense \cite{C2}:
\begin{equation}\label{GlobalBVP}
\begin{cases}\det(D^2u)=f\chi_\Omega &\text{ in $\R^d$,}\\
\nabla u(\Omega)=\Omega^*.&
\end{cases}
\end{equation} 
For a set $S$, $\chi_S$ denotes its characteristic function. 

We  assume $u\in C^2(\overline{\Omega})$ in the rest of the paper, and prove Theorem \ref{Result} for such solutions. This implies the estimate for general solutions via a standard approximation procedure. 

\subsection{Sections and their properties.}
Sections are a fundamental tool in the study the \MA equation. Among several related notions of sections, the centered section introduced in \cite{C1}  is the most convenient for our purpose. We give its definition here.

\begin{defi}
Let $x_0\in\overline{\Omega}$, $h>0$,  \textit{the centered section} of $u$ of height $h$ at $x_0$ is defined by
$$S_h[u](x_0)=\{y\in\R^d|y<u(x_0)+p\cdot(y-x_0)+h\}.$$ Here $p\in\R^d$ is chosen such that the center of mass of $\Shu(x_0)$ stays at $x_0$, that is, $$\frac{1}{|\Shu(x_0)|}\int_{\Shu(x_0)}ydy=x_0.$$ For the existence of such $p$, see \cite{C2}.
\end{defi} 

By the convexity of $u$, these sections are bounded convex subsets of $\R^d$. In order to describe their shapes, we need the following lemma due to Fritz John \cite{J}:

\begin{lem}[John's lemma]\label{John'sLemma}For  any  bounded convex subset $S$ of $\R^d$, there is an ellipsoid $E$ with the same center of mass as $S$ such that $$E\subset S\subset \alpha_d E.$$This factor $\alpha_d$ depends only on the dimension $d$.  

\end{lem}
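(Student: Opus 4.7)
The plan is to derive this centered version from the classical John theorem on the maximum-volume inscribed ellipsoid, followed by a barycenter estimate.

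First, apply John's original theorem to $S$: there is a maximum-volume ellipsoid $E_0 \subset S$, with center $c_0$, such that the dilation $c_0 + d(E_0 - c_0)$ by the factor $d$ about $c_0$ contains $S$. Let $T$ be the affine map that sends $E_0$ to the unit ball $B_1 := \{|y|<1\}$; such a map preserves barycenters and volume ratios. After replacing $S$ by $T(S)$, we may assume
\begin{equation*}
B_1 \subset S \subset B_d,
\end{equation*}
where $B_r := \{|y|<r\}$. Let $x_0 \in S$ denote the barycenter of this normalized $S$.

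The crux is to show that $x_0 + B_\rho \subset S$ for some $\rho > 0$ depending only on $d$. Equivalently, for every unit vector $v$, the support function $h_S(v) := \sup_{y \in S} v \cdot y$ should satisfy $h_S(v) - v \cdot x_0 \ge \rho$. I would write
\begin{equation*}
h_S(v) - v \cdot x_0 = \frac{1}{|S|}\int_S\bigl(h_S(v) - v \cdot y\bigr)\, dy.
\end{equation*}
On the spherical cap $\{y \in B_1 : v \cdot y \le -1/2\}$, which is contained in $S$ and has measure at least a positive dimensional constant, the integrand is at least $h_S(v) + 1/2 \ge 3/2$ (using $B_1 \subset S$, so $h_S(v) \ge 1$). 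Since $|S| \le |B_d|$, this produces a lower bound on $h_S(v) - v \cdot x_0$ depending only on $d$ and uniform in $v$.

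Combined with $S \subset B_d \subset x_0 + B_{2d}$ (using $|x_0| \le d$), we obtain $x_0 + B_\rho \subset S \subset x_0 + B_{2d}$ in the normalized picture. Pulling back by $T^{-1}$ converts these balls into homothetic ellipsoids centered at the barycenter of the original $S$, with the same ratio of radii; this produces the ellipsoid $E$ in the statement with $\alpha_d = 2d/\rho$. The main obstacle is the barycenter estimate: one must show that the center of mass is uniformly far from $\partial S$ in the normalized picture. The half-ball-cap calculation above is what delivers this, exploiting the tension between $B_1 \subset S$ (forcing enough volume to sit on the far side of $x_0$ from any given supporting hyperplane) and $S \subset B_d$ (bounding $|S|$ from above).
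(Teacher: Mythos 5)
The paper gives no proof of this lemma; it is quoted as a known result with a citation to Fritz John \cite{J}. So there is no internal proof to compare against, and your argument should be judged on its own.

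Your proof is correct, and it addresses a genuine subtlety that the citation elides. The classical John theorem produces the maximum-volume inscribed ellipsoid $E_0$ and guarantees $E_0 \subset S \subset c_0 + d(E_0 - c_0)$, but the dilation is centered at the center $c_0$ of $E_0$, which in general is \emph{not} the barycenter of $S$. The version the paper needs (consistent with its convention that $cS$ denotes dilation about the center of mass) requires an ellipsoid centered at the barycenter, and that reduction is exactly what your barycenter estimate supplies. Your normalization $B_1 \subset S \subset B_d$, the identity $h_S(v) - v\cdot x_0 = |S|^{-1}\int_S (h_S(v)-v\cdot y)\,dy$, and the lower bound from the cap $\{y\in B_1 : v\cdot y \le -1/2\}$ (which lies in $S$, has volume bounded below by a dimensional constant, and on which the nonnegative integrand is at least $3/2$) combine with $|S|\le|B_d|$ to give $h_S(v)-v\cdot x_0\ge\rho(d)>0$ uniformly in $v$, hence $x_0+B_\rho\subset S$. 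Together with $S\subset B_d\subset x_0+B_{2d}$ and the fact that affine maps commute with dilations about corresponding points and preserve barycenters, pulling back by $T^{-1}$ yields the centered ellipsoid with $\alpha_d=2d/\rho$. This is a clean, self-contained derivation; the only thing I would add for completeness is a one-line remark that for a bounded convex set the barycenter lies in (the closure of) the set, which you use implicitly when writing $x_0\in S$.
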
 

Such ellipsoid $E$ is called the John ellipsoid of $S$.

For a set $S$ and a positive constant $c$, $cS$ denotes the dilation of $S$ by a factor of $c$ with respect to the center of mass of $S$. 

When $E$ is an ellipsoid, we write $E=x+\sum\lambda_j\omega_j$ when $x$ is the center of $E$, $\omega_j$'s are the directions of the principal axises of $E$, and $\lambda_j$'s are the length of the axis in the direction of $\omega_j.$

To each such ellipsoid  $E=x+\sum\lambda_j\omega_j$, we associate the matrix \begin{equation}\label{ME}M_E=\sum\frac{1}{\lambda_j}\omega_j\otimes\omega_j,
\end{equation}where $\otimes$ denotes the tensor product. This is the matrix that maps $E$ to a unit ball.

Sections share many properties with Euclidean balls. In particular, one has a Besicovitch-type covering lemma with sections. The following is based on Caffarelli-Giuti\'errez \cite{CG}:

\begin{lem}\label{CoveringLemma}Let $A$ be a subset of $\Omega$. Suppose for each $x\in A$, a section $S_{h_x}[u](x)$ is chosen such that the heights $h_x$ are uniformly  bounded. Let $\mathcal{F}$ denote this family of sections. 

There are constants $\eta_0\in(0,1)$ and $K$, depending only on $d$ and  $\Lambda$, such that there is a countable subfamily $\{S_{h_j}[u](x_j)\}$ of $\mathcal{F}$ satisfying the following: 
\begin{enumerate}
\item{$A\subset \cup S_{h_j}[u](x_j)$;}
\item{$\sum\chi_{S_{\eta_0h_j}[u](x_j)}\le K$. }
\end{enumerate}
\end{lem}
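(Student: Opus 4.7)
The plan is to adapt the classical Besicovitch covering strategy, with balls replaced by sections and the elementary comparability of two overlapping balls of similar radii replaced by the \emph{engulfing property} of sections of Alexandrov solutions to the \MA equation. Specifically, under \eqref{fBound}, one has the following fact due to Caffarelli and Caffarelli--Guti\'errez (see \cite{C2, CG}): there exist constants $\eta_0 \in (0,1)$ and $\theta > 1$, depending only on $d$ and $\Lambda$, such that whenever $S_{\eta_0 h}[u](x) \cap S_{\eta_0 h'}[u](x') \ne \emptyset$ with $h \le h'$, one has $S_h[u](x) \subset S_{\theta h'}[u](x')$. Together with the volume bound $|S_h[u](x)| \asymp h^{d/2}$ coming from \eqref{fBound}, this supplies the quasi-metric structure needed to run the standard arguments.

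For the selection, I would well-order $\mathcal{F}$ by decreasing $h_x$. Working dyadically, partition $\mathcal{F}$ into classes $\mathcal{F}_k = \{S_{h_x}[u](x) : h_x \in (2^{-k-1}H, 2^{-k}H]\}$, where $H = \sup h_x < \infty$. Processing $k = 0, 1, 2, \ldots$ in order, at each step I would greedily add $S_{h_x}[u](x)$ to $\mathcal{G}$ if and only if $x$ is not already contained in some previously selected $S_{h_j}[u](x_j)$. Countability at each level follows from the boundedness of $\Omega$ together with the uniform lower bound on the volumes of the $\eta_0$-shrunk sections, which forces only countably many disjoint choices at each scale. The covering property (1) then holds by construction: every $x \in A$ is either selected (and lies in its own section) or was skipped precisely because it belonged to some earlier selected section of larger or equal height.

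The bounded-overlap property (2) is the main obstacle and will take the bulk of the work. Fix a point $y$ and let $J \subset \mathcal{G}$ be the set of selected indices with $y \in S_{\eta_0 h_j}[u](x_j)$. Let $j_0 \in J$ be the index with the largest height $h_{j_0}$. For any other $j \in J$, the selection rule yields $x_j \notin S_{h_{j_0}}[u](x_{j_0})$, while the $\eta_0$-engulfing property applied through the common point $y$ gives $x_j \in S_{\theta h_{j_0}}[u](x_{j_0})$. Thus every such $x_j$ lies in the annular shell $S_{\theta h_{j_0}}[u](x_{j_0}) \setminus S_{h_{j_0}}[u](x_{j_0})$. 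Passing to the normalized picture via the affine map $M_{E_{j_0}}$ from \eqref{ME} associated to the John ellipsoid of $S_{h_{j_0}}[u](x_{j_0})$, Lemma~\ref{John'sLemma} renders $S_{h_{j_0}}[u](x_{j_0})$ comparable to a unit ball and the annulus of bounded diameter. In this normalized picture each $M_{E_{j_0}}(S_{\eta_0 h_j}[u](x_j))$ is a convex set containing the common point $M_{E_{j_0}} y$, with volume bounded below by $c(h_j/h_{j_0})^{d/2}$, while a second application of engulfing shows that the pairwise intersections of these normalized $\eta_0$-sections cannot be too large relative to their sizes. A volume count in the normalized picture then bounds $|J|$ by a constant $K = K(d,\Lambda)$.

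The main difficulty is precisely this final counting step. The classical Besicovitch argument uses only elementary Euclidean geometry of balls, whereas here every overlap estimate must be performed after an affine normalization adapted to the largest enclosing section, and one must verify that the constants extracted from John's lemma, the engulfing property, and the Monge--Amp\`ere volume bound combine into a single dimensional bound depending only on $d$ and $\Lambda$, independent of the particular configuration of sections at hand.
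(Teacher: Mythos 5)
The paper does not actually prove this lemma; it states it as known and cites Caffarelli--Guti\'errez \cite{CG}. Your Besicovitch-via-engulfing strategy is the right framework and in spirit matches what that reference does, so the outline is reasonable. However, the sketch leaves a genuine gap at the one step you flag as hard, and as written that step would not go through.

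In the bounded-overlap argument you fix $y$, let $j_0\in J$ be the index of largest height, deduce that each $x_j$ lies in the shell $S_{\theta h_{j_0}}[u](x_{j_0})\setminus S_{h_{j_0}}[u](x_{j_0})$, normalize by $M_{E_{j_0}}$, and then assert that ``a volume count in the normalized picture bounds $|J|$.'' There are two problems. First, a minor one: the selection rule only guarantees $x_j\notin S_{h_{j_0}}[u](x_{j_0})$ when $j_0$ was selected \emph{before} $j$; since you take $j_0$ of largest height rather than the earliest selected, and within a dyadic class heights differ by up to a factor $2$ while the processing order is arbitrary, this may fail. Second, and fatally for a volume count: after normalizing, $|M_{E_{j_0}}(S_{\eta_0 h_j}[u](x_j))|\asymp (h_j/h_{j_0})^{d/2}$ is \emph{not} bounded below, and these normalized sections can be arbitrarily thin ``needles'' through the common point $M_{E_{j_0}}y$, so no pure volume-packing estimate can bound their number. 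For Euclidean balls the classical Besicovitch proof closes this gap with an angular-separation argument (any two centers, neither inside the other's ball but both balls containing $y$, subtend an angle $>60^\circ$ at $y$); transplanting that to sections is exactly where the work is, because the estimate must be done in an affine frame adapted to one section while the eccentricities of the others in that frame are not uniformly controlled. You name the difficulty but do not resolve it. A cleaner alternative that avoids it entirely is a Vitali-type selection: process dyadic height classes in decreasing order, keep $S_{h_x}[u](x)$ only if $S_{\eta_0 h_x}[u](x)$ is disjoint from all previously kept $S_{\eta_0 h_j}[u](x_j)$. Then property (2) holds trivially with $K=1$, and property (1) follows from the engulfing property: if $x$ was skipped, $S_{\eta_0 h_x}[u](x)$ meets some kept $S_{\eta_0 h_j}[u](x_j)$ with $h_j\gtrsim h_x$, and if $\eta_0$ is chosen small enough relative to the engulfing constant $\theta$, engulfing yields $x\in S_{\eta_0 h_x}[u](x)\subset S_{h_j}[u](x_j)$.
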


\section{Estimates for normalized solutions}
In this section we establish several key estimates in the normalized picture. Later these are applied to our solution after rescaling. The methods are motivated by De Philippis-Figalli \cite{DF} and De Philippis-Figali-Savin \cite{DFS}. However, since we are dealing with global regularity estimates, we need more detailed analysis concerning the interaction between the sections and the boundary of the domain. 

The following {\bf assumptions} are in effect throughout this section:\begin{enumerate}
\item{$U$ is a convex domain in $\R^d$ containing a point $x_0$;}
\item{$v$ is a $C^2$ convex function in $\overline{U}$ extended to $\R^d$ by $$x\mapsto \sup_{y\in U}(v(y)+\nabla v(y)\cdot(x-y));$$}
\item{$Z:=\{v<0\}$ is centered at $x_0$ and normalized in the sense that $B_1(x_0)\subset Z\subset B_{\alpha_d}(x_0)$,  where $\alpha_d$ is the constant  in Lemma \ref{John'sLemma};}
\item{$|\nabla v|\le L_0$ in $Z$;}
\item{$\det(D^2v)=g\chi_U$ in $Z$ with $\frac{1/\Lambda}{|Z\cap U|}\le g\le\frac{\Lambda}{|Z\cap U|}$;}
\item{$E=\sum\lambda_j e_j$ is the John ellipsoid for $Z\cap U.$ Here $\{e_j\}$ is the standard basis of $\R^d$. In particular, $Z\cap U$ is centered at $0$. }
\end{enumerate}

Throughout this section, constants depending only on $d$, $\Lambda$ and $L_0$ are called \textit{universal constants.} 

Denote $$h_0=|\inf_Z v|,$$ assumptions (3) and (5) imply that $0<c\le h_0\le C$ for some universal $c$ and $C$. 

Inside $Z\cap U$, we expect $v$ to behave like the parabola $p(x)=(M_Ex)\cdot x$, where $M_E$ is the matrix defined in \eqref{ME}. An application of the ABP estimate \cite{G} shows that this is indeed true in a large portion of $Z$:

\begin{lem}\label{ABP}Let $Z_{\eta_0}=S_{\eta_0h_0}[v](x_0)$, where $\eta_0$ is the constant in Lemma \ref{CoveringLemma}. Then there are universal constants $C_0$ and $\delta_0$ such that \begin{equation}\label{ABPEstimate}
\frac{|Z_{\eta_0}\cap U\cap \{C_0^{-1}M_E\le D^2v\le C_0M_E\}|}{|Z\cap U|}\ge\delta_0.
\end{equation} 
\end{lem}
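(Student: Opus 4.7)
The plan is to use the \MA comparison principle to trap $v$ between two paraboloids with Hessian comparable to $M_E$ on a positive fraction of $Z_{\eta_0}\cap U$, and then deduce the pointwise two-sided bound $C_0^{-1}M_E\le D^2v\le C_0M_E$ from this sandwich.

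For the lower barrier, set $P_-(x)=c(M_E(x-x_0))\cdot(x-x_0)-h_0$ with $c$ a small universal constant. Its Hessian is $2cM_E$, so $\det D^2P_-\simeq\det M_E\simeq 1/|Z\cap U|$ dominates $\det D^2v=g\chi_U$ on $Z$ for appropriate $c$. Meanwhile $P_-\le 0=v$ on $\partial Z$ because $Z\subset B_{\alpha_d}(x_0)$ controls the quadratic from above. The \MA comparison principle then gives $v\le P_-$ throughout $Z$, and at any $x^\ast\in Z_{\eta_0}$ where $v(x^\ast)\simeq -h_0$ the pinching forces $D^2v(x^\ast)\ge c'M_E$ by convexity.

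For the upper barrier, the natural candidate $P_+(x)=C(M_E(x-x_0))\cdot(x-x_0)-h_0$ with $C$ large satisfies $\det D^2P_+\le \det D^2v$ inside $Z\cap U$, so the reverse comparison would give $v\ge P_+$ on $Z\cap U$ provided the inequality $v\ge P_+$ held on $\partial(Z\cap U)$. This holds on $\partial Z\cap\bar U$ but not a priori on $Z\cap\partial U$, where $v$ is only known to lie in $[-h_0,0]$. To circumvent this obstruction, I would run the comparison on smaller sections that stay inside $U$ and combine them via the Besicovitch-type covering Lemma \ref{CoveringLemma}; the gradient bound $|\nabla v|\le L_0$ from assumption (4), together with the piecewise-affine extension of $v$ outside $U$, should provide the control needed here. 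Intersecting the two positive-density sets produced by the two barriers and taking $C_0$ to be the maximum of the two resulting constants yields \eqref{ABPEstimate} with a universal density $\delta_0$. The main obstacle, as indicated, is the upper-barrier step: controlling the portions of $Z$ that stick outside $U$ is what makes this boundary version of the lemma genuinely harder than its interior counterpart in De Philippis-Figalli \cite{DF} and De Philippis-Figalli-Savin \cite{DFS}.
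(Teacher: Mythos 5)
Your barrier strategy diverges from the paper's mechanism, and the divergence is where the gap lies. The essential difficulty in Lemma \ref{ABP} is not to trap $v$ between two paraboloids (that alone gives only a pointwise inequality, which holds on a set of full measure and tells you nothing about $D^2v$), but to produce a \emph{positive-measure} set of contact points where a paraboloid actually touches $v$ from below. The paper does this by subtracting a small paraboloid $p$ comparable to $M_E$, forming $w=v-p$, and applying the ABP estimate to the convex envelope $\Gamma_w$ in $Z_{\eta_0}$. ABP gives $ch_0^d\le|\nabla\Gamma_w(Z_{\eta_0}\cap\{\Gamma_w=w\})|$, and combined with $\det D^2\Gamma_w\le\det D^2v\le \Lambda/|Z\cap U|$ on the contact set, this forces $|Z_{\eta_0}\cap U\cap\{\Gamma_w=w\}|\gtrsim|Z\cap U|$. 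At a contact point one has $D^2v\ge D^2p=cM_E$, and the upper bound $D^2v\le C_0M_E$ then follows from the determinant constraint. Your proposal never produces a set of positive measure: contact of $v$ with a single paraboloid from below is generically a measure-zero set, and ``at any $x^\ast$ where $v(x^\ast)\simeq-h_0$'' is at best one point. You also have the comparison principle backwards twice: with $\det D^2P_->\det D^2v$ and $P_-\le v$ on $\partial Z$ one gets $P_-\le v$, not $v\le P_-$; and for the upper candidate, taking $C$ large makes $\det D^2P_+$ larger, not smaller, than $\det D^2v$.

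Your remark that the ``upper-barrier step'' is the main obstacle, and that it should be handled by a covering argument, also misses how the paper localizes to $U$. No covering is used inside this lemma; the issue is purely that $Z_{\eta_0}$ may exit $U$. The paper's Step 3 resolves this by exploiting the explicit affine extension of $v$ outside $U$ (assumption (2)): if a contact point $\bar x$ of $\Gamma_w$ lies outside $U$, then $v$ is affine on the segment from $\bar x$ to some $\bar y\in Z_{\eta_0}\cap U$ with $\nabla v(\bar y)=\nabla v(\bar x)$, and a short convexity computation shows $\bar y$ is also a contact point with the same gradient. Hence $\nabla\Gamma_w(Z_{\eta_0}\cap\{\Gamma_w=w\})\subset\nabla\Gamma_w(Z_{\eta_0}\cap U\cap\{\Gamma_w=w\})$, and the ABP measure estimate transfers to the portion inside $U$. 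This is the boundary-specific ingredient, and it is entirely different in flavor from the covering lemma (which is only used in Section 5 for the level-set decay). To repair your argument you would need to replace the barrier comparison with the convex-envelope/ABP mechanism and import this gradient-transfer argument for contact points outside $U$.
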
 

\begin{proof}\textit{Step 1: Construction of comparison functions.}
By the engulfing property \cite{G}, there is a universal constant $0<c_0<1$ such that $c_0Z\subset Z_{\eta_0}\subset Z.$ Consequently,  the ellipsoid $\tilde{E}=c_0\alpha_d\sum\lambda_je_j$ satisfies $\frac{1}{\alpha_d}\tilde{E}\subset Z_{\eta_0}\cap U\subset\tilde{E}.$

Define a quadratic polynomial $\tilde{p}:\tilde{E}\to\R$ by $\tilde{p}(x)=\sum\frac{1}{c_0\alpha_d\lambda_j}x_j^2$, and extend $\tilde{p}$ to the entire $\R^d$ by $\tilde p(x)=\sup_{y\in\tilde{E}}(\tilde{p}(y)+\nabla \tilde{p}(y)\cdot(x-y)).$ 

Then one has $$\sup_{y\in\R^d}|\nabla \tilde{p}(y)|\le \sup_{y\in\tilde{E}}|\nabla \tilde{p}(y)|\le 1,$$ and \begin{equation}\label{BoundForP}0\le \tilde{p}\le C \text{ in $Z_{\eta_0}$}\end{equation}for some universal $C$.  

Up to subtracting an affine function, we have $v=\eta_0h_0$ along $\partial Z_{\eta_0}$, and $v(x_0)=0.$

In particular, if we define $p=\frac{1}{2C}\eta_0h_0\tilde{p}+\frac{1}{2}\eta_0h_0$, where $C$ is the constant in \eqref{BoundForP},  then $$p\le v \text{ on $\partial Z_{\eta_0}$}$$ and $$\frac{1}{2}\eta_0h_0\le p\le \eta_0h_0 \text{ in $Z_{\eta_0}$.}$$

Let $w=v-p$.  

Then $$w\ge 0 \text{ on $\partial Z_{\eta_0},$}$$ and $$|\inf_{Z_{\eta_0}}w|\ge \frac{1}{2}\eta_0h_0.$$

\textit{Step 2: The ABP estimate.}
If $\Gamma_w$ is the convex envelop of $w$ in $Z_{\eta_0}$, then the ABP estimate \cite{G} implies 
\begin{equation}\label{1}ch_0^d\le|\nabla\Gamma_w(Z_{\eta_0}\cap \{\Gamma_w=w\})|\end{equation} for some dimensional $c$. 

For $\bar{x}\in Z_{\eta_0}\cap \{\Gamma_w=w\}$,  there is an affine function $\ell$ such that $$\ell+p\le v \text{ in $Z_{\eta_0}$}$$ and $$\ell(\bar{x})+p(\bar{x})=v(\bar{x}).$$ In particular, one has $$\nabla\ell(\bar{x})+\nabla p(\bar{x})=\nabla v(\bar{x}).$$

\textit{Step 3: Localizing  to $U$.} By assumption (2) at the beginning of this section, either $\bar{x}\in U$, or there is a point $\bar{y}\in Z_{\eta_0}\cap U$ such that $\nabla v(\bar{y})=\nabla v(\bar{x})$, and that $v$ is affine along the line segment between $\bar{x}$ and $\bar{y}$. 

By convexity, one has \begin{align*}\ell(\bar{y})+p(\bar{y})&\ge (\ell+p)(\bar{x})+\nabla(\ell+p)(\bar{x})\cdot (\bar{y}-\bar{x})\\&=v(\bar{x})+\nabla v(\bar{x})\cdot(\bar{y}-\bar{x})\\&=v(\bar{y}).\end{align*}Together with $\ell+p\le v \text{ in $Z_{\eta_0}$}$, this implies $\bar{y}\in Z_{\eta_0}\cap\{w=\Gamma_w\}$. 

In particular, $$\Gamma_w(\bar{x})=\nabla\ell(\bar{x})=\nabla\ell(\bar{y})\in\Gamma_w(Z_{\eta_0}\cap U\cap\{w=\Gamma_w\} ).$$ Since this is true for all $\bar{x}\in Z_{\eta_0}\cap \{\Gamma_w=w\}$, we conclude $$\nabla\Gamma_w(Z_{\eta_0}\cap \{\Gamma_w=w\})\subset \nabla\Gamma_w(Z_{\eta_0}\cap U\cap \{\Gamma_w=w\}).$$

\textit{Step 4: Proof of \eqref{ABPEstimate}.} Using this inclusion in \eqref{1} and note that $D^2\Gamma_w\le D^2 v$ in $\{\Gamma_w=w\}$, we have 
\begin{align*}
ch_0^d&\le |\nabla\Gamma_w(Z_{\eta_0}\cap U\cap \{\Gamma_w=w\})|\\&\le \int_{Z_{\eta_0}\cap U\cap \{\Gamma_w=w\}}\det(D^2\Gamma_w)\\&\le \int_{Z_{\eta_0}\cap U\cap \{\Gamma_w=w\}}\det(D^2 v)\\&\le\frac{\Lambda}{|Z\cap U|}|Z_{\eta_0}\cap U\cap \{\Gamma_w=w\}|.
\end{align*}For the last inequality, we used assumption (5) at the beginning of this section. 

Since $h_0$ is universal, the estimate above implies $$|Z_{\eta_0}\cap U\cap \{\Gamma_w=w\}|\ge\delta_0|Z\cap U|$$ for some universal $\delta_0>0.$

To get \eqref{ABPEstimate},  it suffices to note that in $Z_{\eta_0}\cap U\cap \{\Gamma_w=w\}$, one has $$D^2v\ge D^2p=cM_E$$ for some universal $c.$
\end{proof}

We now use the previous lemma to estimate the integral of pure second derivatives of $v$ in $Z\cap U$ in terms of the integral over `the good set'. We first estimate second order derivatives in the directions along the axises of $\R^d$:

\begin{lem}\label{vjj}
Under the same assumptions as in Lemma \ref{ABP}, there is a universal constant $C$ such that \begin{equation}\label{2}
\int_{Z\cap U}v_{jj}\le C\int_{Z_{\eta_0}\cap U\cap \{C_0^{-1}M_E\le D^2v\le C_0M_E\}}v_{jj}
\end{equation}  for each $j=1,2,\dots, d$.
\end{lem}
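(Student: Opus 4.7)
The strategy is to show that both sides of the claimed inequality are comparable to $\prod_{k\ne j}\lambda_k$, exploiting two different ingredients: the gradient bound $|\nabla v|\le L_0$ together with Fubini-style slicing will produce the upper bound on the left-hand side, while Lemma \ref{ABP} together with the explicit form of $M_E$ will produce the lower bound on the right-hand side.

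For the left-hand side, I would slice in the $e_j$-direction. By convexity of both $Z$ and $U$, for each $x'$ in the projection $\pi_j(Z\cap U)$ onto $e_j^\perp$, the fiber $\{t:(x',t)\in Z\cap U\}$ is an interval $[a(x'),b(x')]$ contained in $\overline{U}$. Since $v\in C^2(\overline{U})$ is convex, $v_j(x',\cdot)$ is nondecreasing and the fundamental theorem of calculus gives
\begin{equation*}
\int_{a(x')}^{b(x')} v_{jj}(x',t)\,dt = v_j(x',b(x'))-v_j(x',a(x')) \le 2L_0
\end{equation*}
by assumption (4). Integrating in $x'$ and using $Z\cap U\subset \alpha_d E$ from assumption (6) (so that $\pi_j(Z\cap U)$ is contained in the $(d-1)$-dimensional ellipsoid $\alpha_d\sum_{k\ne j}\lambda_k e_k$, whose measure is $C_d\prod_{k\ne j}\lambda_k$) yields
\begin{equation*}
\int_{Z\cap U} v_{jj}\,dx \le 2L_0\,|\pi_j(Z\cap U)| \le C\prod_{k\ne j}\lambda_k
\end{equation*}
for a universal $C$.

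For the right-hand side, on the good set $G:=Z_{\eta_0}\cap U\cap\{C_0^{-1}M_E\le D^2 v\le C_0 M_E\}$, the bound $D^2 v\ge C_0^{-1}M_E$ gives $v_{jj}=e_j^\top D^2 v\, e_j \ge C_0^{-1}(M_E)_{jj}=C_0^{-1}/\lambda_j$. Lemma \ref{ABP} provides $|G|\ge \delta_0|Z\cap U|$, while assumption (6) and $E\subset Z\cap U$ give $|Z\cap U|\ge|E|=c_d\prod_k\lambda_k$. Therefore
\begin{equation*}
\int_G v_{jj}\,dx \;\ge\; \frac{C_0^{-1}}{\lambda_j}\cdot\delta_0 c_d \prod_k\lambda_k \;=\; c\prod_{k\ne j}\lambda_k.
\end{equation*}
Combining the two displays yields the inequality with a universal constant.

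Given Lemma \ref{ABP}, the proof is fairly direct; the only point to watch is the rigor of the slicing argument, but the assumed $C^2$ regularity of $v$ on $\overline{U}$ together with the fact that every fiber $[a(x'),b(x')]$ lies in $\overline{U}$ makes the Fubini step immediate. I do not expect a serious obstacle beyond keeping track of the factors of $\lambda_j$ and confirming that the dependence on universal constants $(\alpha_d, C_0, \delta_0, L_0, \Lambda)$ assembles correctly.
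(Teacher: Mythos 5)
Your proof is correct and follows essentially the same approach as the paper: both sides are shown to be comparable to $\prod_{k\ne j}\lambda_k$, with the lower bound on the right-hand side coming from Lemma \ref{ABP} together with $v_{jj}\ge C_0^{-1}/\lambda_j$ on the good set, and the upper bound on the left-hand side coming from the gradient bound $|\nabla v|\le L_0$ together with the John ellipsoid control on the shape of $Z\cap U$. The only difference is cosmetic: the paper first rescales by $M_E$ and applies the divergence theorem to produce a boundary integral bounded via $\mathcal{H}^{d-1}(\partial M_E(Z\cap U))\le C$, whereas you slice directly in the $e_j$-direction and apply the fundamental theorem of calculus on each fiber, which is the same computation expressed without the intermediate change of variables.
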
 
\begin{proof}
To simply our notations, let's denote `the good set' by $$G=Z_{\eta_0}\cap U\cap \{C_0^{-1}M_E\le D^2v\le C_0M_E\}.$$

With Lemma \ref{ABP}, the right-hand side of \eqref{2} can be  bounded from below by 
\begin{align*}
\int_G v_{jj}&\ge C_0^{-1}\frac{1}{\lambda_j}|G|\\&\ge C_0^{-1}\frac{1}{\lambda_j}\delta_0|Z\cap U|.\end{align*}Since $E=\sum\lambda_je_j$ is the John ellipsoid for $Z\cap U$, we have $$|Z\cap U|\ge c\lambda_1\lambda_2\dots\lambda_d$$ for some dimensional $c$.
As a result,  \begin{equation}\label{3}
\int_G v_{jj}\ge c\lambda_1\lambda_2\dots\lambda_d/\lambda_j
\end{equation} for some universal $c$. 

Now we estimate the left-hand side of \eqref{2}.

Define $v_{M_E}(x)=v(M_E^{-1}x)=v(\lambda_1x_1,\lambda_2x_2,\dots,\lambda_dx_d)$, then $$\frac{\partial}{\partial x_j}v_{M_E}(x)=\lambda_j\frac{\partial}{\partial x_j}v(M^{-1}x).$$ By assumption (4) at the beginning of this section, we have  $|\frac{\partial}{\partial x_j}v_{M_E}|\le L_0\lambda_j$ in $M_E(Z)$.

The left-hand side of \eqref{2} can be computed as  \begin{align*}
\int_{Z\cap U}v_{jj}&=\int_{M_E(Z\cap U)}v_{jj}(M_E^{-1}x)\det(M_E^{-1})dx\\&=\det(M_E^{-1})\int_{M_E(Z\cap U)}\frac{1}{\lambda_j^2}\frac{\partial^2}{\partial x_j^2}v_{M_E}(x)dx\\&=\det(M_E^{-1})\frac{1}{\lambda_j^2}\int_{\partial M_E(Z\cap U)}\frac{\partial}{\partial x_j}v_{M_E}\nu\cdot e_j,
\end{align*}where $\nu$ is the outward unit normal to $M_E(Z\cap U)$. 

Consequently, \begin{align*}\int_{Z\cap U}v_{jj}&\le \det(M_E^{-1})\frac{1}{\lambda_j^2}C\lambda_j\mathcal{H}^{d-1}(\partial M_E(Z\cap U))\\&\le C\lambda_1\lambda_2\dots\lambda_d/\lambda_j\end{align*} for some universal $C$. Here we used $B_1\subset M_E(Z\cap U)\subset B_{\alpha_d}$ to control  the $(d-1)$-dimensional Hausdorff measure of $\partial M_E(Z\cap U)$.

Combining this with \eqref{3}, we get the desired estimate. \end{proof} 

For a general vector $\xi=\sum \xi_je_j$ in $\R^d$, define $v_{\xi\xi}:=(D^2v\xi)\cdot\xi.$ A similar estimate as the one in Lemma \ref{vjj} holds for these second order derivatives.  
\begin{lem}\label{vee}
Under the same assumptions as in Lemma \ref{ABP}, there is a universal constant $C$ such that 
\begin{equation}\label{4}
\int_{Z\cap U}v_{\xi\xi}\le C\int_{Z_{\eta_0}\cap U\cap \{C_0^{-1}M_E\le D^2v\le C_0M_E\}}v_{\xi\xi}.
\end{equation} 
\end{lem}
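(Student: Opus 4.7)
The plan is to reduce the estimate for a general direction $\xi$ to the axis-aligned estimate \eqref{2} of Lemma \ref{vjj} by means of a pointwise inequality coming from the positivity of $D^2v$, and then to exploit the pinching $C_0^{-1}M_E\le D^2v\le C_0 M_E$ on the good set
\[
G = Z_{\eta_0}\cap U \cap \{C_0^{-1}M_E\le D^2v\le C_0 M_E\}
\]
to put the bound back into the form involving $v_{\xi\xi}$.

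First, since $D^2v$ is positive semi-definite, the Cauchy-Schwarz inequality for the associated bilinear form yields $|v_{ij}|\le\sqrt{v_{ii}v_{jj}}$ for all $i,j$. Writing $\xi=\sum_j \xi_j e_j$ and expanding $v_{\xi\xi}=\sum_{i,j}\xi_i\xi_j v_{ij}$, the triangle inequality together with Cauchy-Schwarz on $d$ terms gives the pointwise bound
\[
v_{\xi\xi}\le \Bigl(\sum_i|\xi_i|\sqrt{v_{ii}}\,\Bigr)^{\!2} \le d\sum_i \xi_i^2\,v_{ii}.
\]

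Integrating this over $Z\cap U$ and invoking Lemma \ref{vjj} in each coordinate direction gives
\[
\int_{Z\cap U} v_{\xi\xi}\le d\sum_i \xi_i^2\int_{Z\cap U} v_{ii} \le Cd\sum_i\xi_i^2\int_G v_{ii} = Cd\int_G \sum_i \xi_i^2\, v_{ii}.
\]
On $G$ the pinching gives $v_{ii}\le C_0/\lambda_i$, hence $\sum_i \xi_i^2 v_{ii}\le C_0(M_E\xi)\cdot\xi$; combined with $v_{\xi\xi}=(D^2v\xi)\cdot\xi\ge C_0^{-1}(M_E\xi)\cdot\xi$ on $G$, this yields $\sum_i \xi_i^2 v_{ii}\le C_0^2\,v_{\xi\xi}$ on $G$, from which \eqref{4} follows with a universal constant.

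The main point requiring care is the pointwise reduction in the first step: it must lose at most a dimensional factor, since any multiplicative dependence on the ratios of the $\lambda_j$'s would be fatal — the John ellipsoid can be arbitrarily eccentric, and the good set $G$ is much smaller than $Z\cap U$ in exactly those degenerate directions. Fortunately the Cauchy-Schwarz bound for PSD matrices is perfectly calibrated: the loss is a factor of $d$, absorbed by the universal constant. Once this is in hand, the rest is bookkeeping — apply Lemma \ref{vjj} coordinatewise and use the good-set pinching to pass from $\sum_i\xi_i^2 v_{ii}$ back to $v_{\xi\xi}$.
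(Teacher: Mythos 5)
Your proof is correct and follows essentially the same route as the paper: reduce to Lemma \ref{vjj} via the pointwise bound $v_{\xi\xi}\le d\sum_i\xi_i^2 v_{ii}$ (you obtain it via Cauchy--Schwarz for the PSD form, the paper via the equivalent arithmetic-geometric mean splitting of the off-diagonal terms), then use the two-sided pinching $C_0^{-1}M_E\le D^2v\le C_0M_E$ on $G$ to convert $\sum_i\xi_i^2 v_{ii}$ back to $v_{\xi\xi}$ at the cost of a factor $C_0^2$.
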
 

\begin{proof}Again we write $$G=Z_{\eta_0}\cap U\cap \{C_0^{-1}M_E\le D^2v\le C_0M_E\}.$$

By convexity of $v$, $D^2v\ge 0$.  Thus for $1\le i, j\le d$ one has $$v_{ii}v_{jj}\ge v_{ij}^2.$$

Therefore, \begin{align*}v_{\xi\xi}&=\sum_{j=1}^dv_{jj}\xi_j^2+\sum_{i\neq j}v_{ij}\xi_i\xi_j\\&\le \sum_{j=1}^dv_{jj}\xi_j^2+\frac{1}{2}\sum_{i\neq j}(v_{ii}\xi_i^2+v_{jj}\xi_j^2)\\&\le d\sum v_{jj}\xi_j^2.\end{align*}

Combining this with Lemma \ref{vjj}, we can estimate the left-hand side of \eqref{4} as 
\begin{align*}
\int_{Z\cap U}v_{\xi\xi}&\le d\sum\xi_j^2\int_{Z\cap U}v_{jj}\\&\le Cd\sum\xi_j^2\int_Gv_{jj}.
\end{align*}
Now note that on $G$, $C_0^{-1}M\le D^2v\le C_0M$. Thus $$ \sum v_{jj}\xi_j^2\le C_0\sum\frac{1}{\lambda_j}\xi_j^2$$ and $$(D^2v\xi)\cdot\xi\ge C_0^{-1}\sum\frac{1}{\lambda_j}\xi_j^2.$$

Therefore we can continue the previous estimate by  
\begin{align*}
\int_{Z\cap U}v_{\xi\xi}&\le Cd\sum\xi_j^2\int_Gv_{jj}\\&\le CdC_0^{2}\int_G v_{\xi\xi}.
\end{align*}
This is the desired estimate. 
\end{proof}

\section{Estimates in sections of our solution}
In this section we rescale the estimates from the previous one, so that they can be applied to our solution $u$. These computations are more or less standard. Nevertheless, we include them here for completeness.

\begin{lem}\label{uee}
Suppose $u$ is a solution to \eqref{GlobalBVP}.  For a point $x_0\in\Omega$ and $h>0$, let $A$ be the symmetric matrix such that $B_1(x_0)\subset A(\Shu(x_0))\subset B_{\alpha_d}(x_0).$

Up to rotation and translation, suppose $E=\sum\lambda_je_j$ is the John ellipsoid for $\Shu(x_0)\cap\Omega$. 

Then there are constants $C$ and $C_0$, depending only on $d$ and $\Lambda$, such that for all $\xi\in\R^d$

\begin{equation*}
\int_{\Shu(x_0)\cap\Omega}u_{\xi\xi}\le C\int_Gu_{\xi\xi},
\end{equation*} where $G={S_{\eta_0h[u](x_0)}\cap\Omega\cap\{C_0^{-1}hAM_E\le D^2u\le C_0hAM_E\}}.$\end{lem}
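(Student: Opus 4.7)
The plan is to reduce to the normalized setting of Section~3 by an affine rescaling, apply Lemma~\ref{vee}, and change variables back to obtain the claim.

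\textbf{Setup of the rescaling.} Let $\ell(y)=u(x_0)+p\cdot(y-x_0)$ be the affine function for the centered section, so $\Shu(x_0)=\{u<\ell+h\}$. With $T(x)=x_0+A(x-x_0)$, define
$$v(y)=\tfrac{1}{h}\bigl[u(T^{-1}y)-\ell(T^{-1}y)-h\bigr],\qquad U:=T(\Omega),\qquad Z:=\{v<0\}=T(\Shu(x_0)).$$
The hypothesis on $A$ gives $B_1(x_0)\subset Z\subset B_{\alpha_d}(x_0)$.

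\textbf{Verifying the assumptions of Section 3.} The chain rule yields
$$\nabla v(y)=\tfrac{1}{h}A^{-1}\bigl(\nabla u(T^{-1}y)-p\bigr),\qquad D^2v(y)=\tfrac{1}{h}A^{-1}D^2u(T^{-1}y)A^{-1},$$
so $\det D^2v(y)=\tfrac{f(T^{-1}y)}{h^d(\det A)^2}\chi_U(y)$. Combined with the standard volume/height relation $h^d\det A\approx|\Shu(x_0)\cap\Omega|$ coming from Caffarelli's theory of centered sections for Monge--Amp\`ere equations with bounded right-hand side, the density of $\det D^2 v$ is comparable to $1/|Z\cap U|$ with constants depending only on $d$ and $\Lambda$. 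Convexity, $C^2$ regularity and the sup-extension formula carry over from $u$, and the gradient bound $|\nabla v|\le L_0$ in $Z$ follows from the natural boundary condition $\nabla u(\Omega)=\Omega^*$. Finally, after a further rotation that does not affect the statement, we may align coordinates so that the John ellipsoid $E'$ of $Z\cap U$ is axis-aligned, with $E'=A(E)$ and both $A$ and $M_E$ simultaneously diagonal in the standard basis.

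\textbf{Applying Lemma~\ref{vee} and transforming back.} Fix $\eta\in\R^d$ and put $\xi=A\eta$. Lemma~\ref{vee} applied to $v$ in direction $\xi$ gives
$$\int_{Z\cap U}v_{\xi\xi}\,dy\le C\int_{Z_{\eta_0}\cap U\cap\{C_0^{-1}M_{E'}\le D^2v\le C_0 M_{E'}\}}v_{\xi\xi}\,dy.$$
From $v_{\xi\xi}(y)=\tfrac{1}{h}u_{\eta\eta}(T^{-1}y)$ and the Jacobian $\det A$ under $y=Tx$, both sides pick up a common factor $\det A/h$ that cancels. The Hessian-sandwich condition pulls back via $D^2 v=\tfrac{1}{h}A^{-1}D^2uA^{-1}$ to
$$C_0^{-1}hAM_{E'}A\le D^2u(x)\le C_0 hAM_{E'}A;$$
since in the aligned basis $M_{E'}=A^{-1}M_E$, we get $AM_{E'}A=AM_E=M_EA$, which is symmetric and matches the condition in the lemma. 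The centered section $Z_{\eta_0}$ of $v$ at the universal height $\eta_0 h_0$ pulls back to a centered section of $u$ of height $\asymp h$; engulfing lets us enclose it in $S_{\eta_0 h}[u](x_0)$ after adjusting universal constants.

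\textbf{Main obstacle.} The technical heart is the volume/height scaling $h^d\det A\approx|\Shu(x_0)\cap\Omega|$, which is what makes assumption (5) of Section~3 hold with universal constants; it rests on Caffarelli's centered-section theory together with the fact that $x_0$, being the center of mass of $\Shu(x_0)$ and lying in $\Omega$, forces a definite fraction of the section to sit inside $\Omega$. A second subtle point is interpreting the matrix condition $\{C_0^{-1}hAM_E\le D^2u\le C_0 hAM_E\}$ as a genuine matrix sandwich: symmetry of $AM_E$ requires $A$ and $M_E$ to be simultaneously diagonalizable, which is arranged by the rotation implicit in the hypothesis $E=\sum\lambda_j e_j$.
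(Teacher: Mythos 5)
Your overall route is the same as the paper's: subtract the supporting affine function, rescale by $A$ and divide by $h$, verify the normalizing assumptions of Section~3, apply Lemma~\ref{vee} in a transformed direction, and change variables back. The observation that the crux of assumption~(5) is the volume relation $h^d\det A\approx|S_h[u](x_0)\cap\Omega|$ is correct and usefully made explicit; the paper dismisses this with ``it is not difficult to see.''

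There is, however, a genuine gap in your verification of assumption~(4). You assert that $|\nabla v|\le L_0$ ``follows from the natural boundary condition $\nabla u(\Omega)=\Omega^*$.'' Unpacking this gives $\nabla v(y)=\tfrac1hA^{-1}(\nabla u(T^{-1}y)-p)$, and bounding $|\nabla u-p|$ by $\mathrm{diam}(\Omega^*)$ yields $|\nabla v|\lesssim \|A^{-1}\|\,\mathrm{diam}(\Omega^*)/h$, which is \emph{not} universal: it depends on $h$ and on the geometry of $S_h[u](x_0)$, and would blow up in general. What actually produces a universal $L_0$ is a gradient \emph{oscillation} bound over the section, obtained geometrically: by the doubling/engulfing property of sections, the level set $\{v<1\}$ (the rescaling of $S_{2h}[u](x_0)$) lies at a positive universal distance from $Z=\{v<0\}$, and since $v$ is convex with $-C\le v\le 0$ in $Z$ and $v\le1$ up to that larger set, the gradient of $v$ in $Z$ is controlled by a universal constant. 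This is what the paper invokes, and without it $L_0$ is not the quantity you need.

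A second problem is the claim that ``after a further rotation $\dots$ both $A$ and $M_E$ [are] simultaneously diagonal in the standard basis,'' and, in your final paragraph, that this ``is arranged by the rotation implicit in the hypothesis $E=\sum\lambda_je_j$.'' Two symmetric positive definite matrices admit a common orthonormal eigenbasis only if they commute, which $A$ and $M_E$ do not in general; and the rotation already used to make $M_E$ diagonal leaves you no freedom to also diagonalize $A$ (any further rotation would undo the first). Your identity $AM_{E'}A=AM_E=M_EA$ therefore does not hold. The honest output of the change of variables is the symmetric matrix $A\tilde MA$ (in the paper's notation, with $\tilde M$ the symmetric matrix sending $A(E)$ to the unit ball), and the matching with $hAM_E$ in the lemma statement should be read as ``the symmetric matrix, unique up to universal comparability, associated to the John ellipsoid of $S_h[u](x_0)\cap\Omega$ after the normalization by $A$'' rather than as a literal matrix product; the paper's own line ``$\tilde MA=M_E$'' is the same abuse. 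Your version, by introducing a false simultaneous-diagonalization step, turns an imprecision into an explicit error.
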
 

The existence of this normalizing $A$ is a consequence of Lemma \ref{John'sLemma}. 

Here $\eta_0$ is the constant in Lemma \ref{CoveringLemma}. $M_E$ is the matrix defined in \eqref{ME}.

\begin{proof}
Let $p$ be the vector such that $\Shu(x_0)=\{y|u(y)<u(x_0)+p\cdot(y-x_0)+h\}$. Let $\ell$ be the affine function $x\mapsto p\cdot(x-x_0)+h$. 

Define $v(x)=\frac{1}{h}(u-\ell)(A^{-1}x)$, $Z=A(\Shu(x_0))$, $U=A(\Omega)$ and $g(x)=\frac{1}{|Z\cap U|}f(A^{-1}x)$. Then it is not difficult to see the assumptions (1)-(3) and (5) at the beginning of Section 3 are satisfied, up to a dimensional change of the value of $\Lambda$.  

Moreover, by the doubling property, $Z'=\{v<1\}$ is at a positive distance to $Z$, where the distance depending only on $d$ and $\Lambda$. Therefore, the value $L_0$ as in assumption (4) in Section 3 depends only on $d$ and $\Lambda.$

Let $\tilde{E}=A(E)$. Then $\tilde{E}$ is the John ellipsoid for $Z\cap U$ as in assumption (6) at the beginning of Section 3. Suppose $\tilde{E}=\sum\tilde{\lambda_j}\tilde{e_j}$. Denote by $\tilde{M}$ the matrix $\sum\frac{1}{\tilde{\lambda_j}}\tilde{e_j}\otimes\tilde{e_j}$, where $\otimes$ denotes the tensor product. Then Lemma \ref{vee}, applied to the direction $\tilde{\xi}=A\xi$, gives
$$\int_{A(\Shu(x_0)\cap\Omega)}v_{\tilde{\xi}\tilde{\xi}}\le C\int_{A(S_{\eta_0h}[u](x_0)\cap\Omega)\cap \{C_0^{-1}\tilde{M}\le D^2v\le C_0\tilde{M}\}}v_{\tilde{\xi}\tilde{\xi}}$$
 for some $C$ and $C_0$ depending only on $d$ and $\Lambda$.
 
 Back to the original variables, this means
 $$\int_{\Shu(x_0)\cap\Omega}(D^2u \xi)\cdot \xi\le C\int_{\tilde{G}}(D^2u \xi)\cdot\xi,$$where $$\tilde{G}=S_{\eta_0h}[u](x_0)\cap\Omega\cap\{C_0^{-1}hA\tilde{M}A\le D^2u\le C_0hA\tilde{M}A\}.$$
 
To conclude,  it suffices to note that $\tilde{M}A=M_E.$\end{proof} 

Up to a dimensional constant, this can be upgraded to an estimate for the integral of $|D^2u|$:
\begin{prop}\label{uHessian}
Under the same assumptions as in Lemma \ref{uee}, there are constants $C$ and $C_0$, depending only on $d$ and $\Lambda$, such that 
$$\int_{\Shu(x_0)\cap\Omega}|D^2u|\le C\int_G|D^2u|$$ for $G=S_{\eta_0h[u](x_0)}\cap\Omega\cap\{C_0^{-1}hAM_E\le D^2u\le C_0hAM_E\}.$
\end{prop}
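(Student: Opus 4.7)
The plan is to reduce Proposition \ref{uHessian} to Lemma \ref{uee} applied along the standard basis, using that for a convex function the matrix $D^2u$ is positive semi-definite, so its norm is comparable to its trace up to a dimensional constant.

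First, since $u$ is convex, $D^2u\ge 0$ a.e., and for any nonnegative symmetric matrix $M$ one has
\begin{equation*}
|M|\le C(d)\,\mathrm{tr}(M)\le C(d)\,d\,|M|,
\end{equation*}
regardless of whether $|\cdot|$ denotes the operator or the Frobenius norm. In particular,
\begin{equation*}
|D^2u|\le C(d)\sum_{j=1}^d u_{e_je_j}\quad\text{and}\quad \sum_{j=1}^d u_{e_je_j}\le d\,|D^2u|
\end{equation*}
pointwise a.e.

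Next, I apply Lemma \ref{uee} to each coordinate vector $\xi=e_j$ for $j=1,\dots,d$, obtaining
\begin{equation*}
\int_{\Shu(x_0)\cap\Omega}u_{e_je_j}\le C\int_G u_{e_je_j},
\end{equation*}
where $G=S_{\eta_0h}[u](x_0)\cap\Omega\cap\{C_0^{-1}hAM_E\le D^2u\le C_0hAM_E\}$ and the constants $C,C_0$ depend only on $d$ and $\Lambda$. Summing these $d$ inequalities and using the two-sided comparison between $|D^2u|$ and $\mathrm{tr}(D^2u)$ above, I conclude
\begin{equation*}
\int_{\Shu(x_0)\cap\Omega}|D^2u|\le C(d)\int_{\Shu(x_0)\cap\Omega}\mathrm{tr}(D^2u)\le C'\int_G\mathrm{tr}(D^2u)\le C''\int_G|D^2u|,
\end{equation*}
with $C''$ depending only on $d$ and $\Lambda$.

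There is essentially no obstacle: the content of the proposition is already in Lemma \ref{uee}, and the only thing to check is that passing from pure second derivatives in the $d$ coordinate directions to the full Hessian norm loses only a dimensional factor, which is immediate from the positivity of $D^2u$. I note that one does not need to adapt the basis to the ellipsoid $E$ or to the matrix $AM_E$, since the trace bound is basis-independent and Lemma \ref{uee} is valid for every $\xi\in\R^d$.
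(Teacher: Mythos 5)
Your proof is correct and is essentially the same as the paper's: the paper likewise sums the estimate of Lemma \ref{uee} over $d$ orthogonal directions to get the bound for $\Delta u$ and then invokes the pointwise comparison $|D^2u|\le \Delta u\le d|D^2u|$ valid for convex functions.
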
 

\begin{proof}
By summing up the estimate in Lemma \ref{uee} in $d$ orthogonal directions, we get a similar estimate where the integrand is $\Delta u$.  From here it suffices to note that for convex functions $|D^2u|\le \Delta u\le d|D^2u|.$
\end{proof} 

Under the assumptions as in Lemma \ref{uee}, the matrix that defines the `good set' $G$ associated with $\Shu(x_0)$ is $hAM_E.$ The next result says that this matrix has the correct behaviour when $h$ is large and when $h\to 0$. 

To simplify our notations, let's define the matrix $A_h$ and $M_h$ to be the matrices $A$ and $M_E$ as in Lemma \ref{uee} for the section $\Shu(x_0).$

 Let $T_h=hA_hM_h$.
Then one has 
\begin{prop}\label{CorrectBehaviour}
There is a constant $C_1$, depending only on $d$, $\Lambda$, and the inner and outer radii of $\Omega$ and $\Omega^*$, such that $$\frac{1}{C_1}\le T_1\le C_1.$$

There is a constant $C_2$, depending only on $d$, such that for $h>0$ small, $$\frac{1}{C_2}D^2u(x_0)\le T_h\le C_2D^2u(x_0).$$ 
\end{prop}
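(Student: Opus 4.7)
The two statements have quite different characters and I would treat them separately. The second follows from a Taylor-expansion analysis using the $C^2$ hypothesis, while the first is a uniform estimate that leans on the natural boundary condition together with standard Monge-Amp\`ere geometric tools.

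For the second assertion, fix $x_0\in\Omega$ and set $D=D^2u(x_0)$. Since $x_0$ is an interior point and $u\in C^2(\overline{\Omega})$, for $h$ sufficiently small (depending on the modulus of continuity of $D^2u$) the section $\Shu(x_0)$ lies inside $\Omega$, and the Taylor expansion of $u$ at $x_0$ shows that the rescaled set $h^{-1/2}(\Shu(x_0)-x_0)$ is within $o(1)$ Hausdorff distance of the fixed ellipsoid $\{z:(Dz)\cdot z<2\}$. That ellipsoid is its own John ellipsoid, and the corresponding $A$ and $M$ matrices are both proportional (with a dimensional constant) to $(2h)^{-1/2}D^{1/2}$. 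A continuity argument together with the quantitative John's lemma shows that $A_h$ and $M_h$ of the actual section $\Shu(x_0)$ enjoy the same comparison, whence
$$T_h=hA_hM_h\;\sim\; h\cdot(2h)^{-1}D\;=\;\tfrac12 D^2u(x_0),$$
with a purely dimensional constant, as claimed.

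For the first assertion, the plan is to reduce to two geometric facts about the section $S_1[u](x_0)$ that should hold uniformly in $x_0\in\overline{\Omega}$: that its John ellipsoid has axes lying in a fixed range $[c,C]$, and that $|S_1[u](x_0)\cap\Omega|$ is comparable to $|S_1[u](x_0)|$ up to a multiplicative constant. The first of these forces $\|A_1\|,\|A_1^{-1}\|\le C$, and the second, combined with John's lemma applied to $S_1\cap\Omega$, forces the axes of $E_1$ into a comparable range, giving the analogous bounds on $M_{E_1}$. The bound on the product $T_1=A_1M_{E_1}$ then follows from $\|T_1\|\le\|A_1\|\|M_{E_1}\|$ and $\|T_1^{-1}\|\le\|M_{E_1}^{-1}\|\|A_1^{-1}\|$. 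The size of $S_1[u](x_0)$ would be controlled by combining the a priori bound $|\nabla u|\le\mathrm{diam}(\Omega^*)$ on all of $\R^d$ (immediate from the definition of the extension) with Alexandrov's maximum principle applied to $u$ on $\Omega$, exploiting $f\in[1/\Lambda,\Lambda]$ and the inner and outer radii of $\Omega$; the engulfing and doubling properties of sections then propagate the estimates to all centers $x_0\in\overline{\Omega}$.

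The main obstacle is establishing the volume comparability $|S_1\cap\Omega|\gtrsim|S_1|$ when $x_0$ lies on or near $\partial\Omega$, since in that case the section may extend well outside $\Omega$ into the region where $u$ has been extended affinely. This is precisely where the natural boundary condition $\nabla u(\Omega)=\Omega^*$ enters in an essential way: by forcing $\nabla u$ to take values in the bounded set $\Omega^*$ everywhere on $\R^d$, it prevents the extended $u$ from being too flat outside $\Omega$, and hence keeps the bulk of each centered section inside $\Omega$. Once this comparability is in hand, the remaining ingredients are standard applications of Alexandrov's estimate, the engulfing property, and the quantitative John's lemma, in a manner that parallels Caffarelli's boundary theory for optimal transport.
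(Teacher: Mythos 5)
Your treatment of the second assertion matches the paper's: both reduce to a Taylor expansion at the interior point $x_0$, observe that for $h$ small $S_h[u](x_0)\subset\Omega$ and is comparable to the ellipsoid $\{(D^2u(x_0)z)\cdot z<2h\}$, and conclude that $A_h$ and $M_h$ are both comparable to $h^{-1/2}(D^2u(x_0))^{1/2}$, giving $T_h=hA_hM_h\sim D^2u(x_0)$ up to a dimensional constant. That part is fine and essentially identical to the paper.

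For the first assertion, however, your argument is not closed, and the tool you gesture at is not quite the right one. The entire point of the first statement is the upper inclusion $S_1[u](x_0)\subset B_{R_1}(x_0)$ with $R_1$ uniform over $x_0\in\Omega$ (including $x_0$ arbitrarily close to $\partial\Omega$); this is the step that forces $A_1$ to be bounded from below and the semi-axes of the John ellipsoid of $S_1\cap\Omega$ to be bounded from above. Alexandrov's maximum principle combined with the gradient bound $|\nabla u|\le\mathrm{diam}(\Omega^*)$ gives the \emph{inner} inclusion $B_{r_1}(x_0)\subset S_1[u](x_0)$, but it does not by itself prevent $S_1$ from being very elongated: Alexandrov controls the height in terms of distance to the boundary of the section and the Monge--Amp\`ere mass, and for a section that exits $\Omega$ the right-hand side vanishes outside $\Omega$, so the estimate gives no upper bound on the diameter. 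What is actually needed here is Caffarelli's \emph{uniform strict convexity up to the boundary} (and the associated global $C^{1,\alpha}$ estimate) for the second boundary value problem, which the paper invokes directly from \cite{C2}. This is a substantive theorem, not a ``standard application of Alexandrov's estimate and the engulfing property,'' and omitting it leaves the crucial step unproved. Once the two inclusions $B_{r_1}(x_0)\subset S_1[u](x_0)\subset B_{R_1}(x_0)$ are in hand, your worry about the volume of $S_1\cap\Omega$ when $x_0$ is near $\partial\Omega$ disappears by elementary convex geometry: since $x_0\in\Omega$ and $\Omega$ contains a fixed ball, the cone from $x_0$ over that ball already forces $|B_{r_1}(x_0)\cap\Omega|\ge c|B_{r_1}(x_0)|$, and hence $|S_1\cap\Omega|\ge c|B_{r_1}(x_0)|$; the paper does not need, and you do not need, the stronger statement $|S_1\cap\Omega|\gtrsim|S_1|$.
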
 

\begin{proof}
By Lipschitz estimate and uniform strict convexity of $u$ \cite{C2}, we have $$B_{r_1}(x_0)\subset S_1[u](x_0)\subset B_{R_1}(x_0)$$ for some $r_1$ and $R_1$ depending on $d$, $\Lambda$, and inner and outer radii of $\Omega$ and $\Omega^*$.

Consequently, $A_1$ is  bounded from both sides by constants depending only on $d$, $\Lambda$, and inner and outer radii of $\Omega$ and $\Omega^*$.

Meanwhile, $S_1[u](x_0)\cap\Omega\subset B_{R_1}(x_0)$ and $|S_1[u](x_0)\cap\Omega|\ge |B_{r_1}(x_0)\cap\Omega|\ge c|B_{r_1}(x_0)|$ for some $c$ depending only on the inner and outer radii of $\Omega$.

As a result,  the John ellipsoid for $S_1[u](x_0)\cap\Omega$ has a diameter that is  bounded from above and a volume that is  bounded from below. Thus $M_h$ is also  bounded from both sides by constants depending only on $d$, $\Lambda$, and inner and outer radii of $\Omega$ and $\Omega^*$.

Therefore, $\frac{1}{C_1}\le T_1=A_1M_1\le C_1$ for some $C_1$ depending only on $d$, $\Lambda$, and inner and outer radii of $\Omega$ and $\Omega^*$.

To see the second statement in the proposition, we first note that when $h$ is small, $\Shu(x_0)\subset\Omega$ since $x_0\in\Omega$. 

Consequently $A_h=M_h$ for small $h$ and $T_h=hA_h^2.$ 

By $C^2$ regularity of $u$ inside $\Omega$, up to subtracting an affine function, $$u(x)=(D^2u(x_0)(x-x_0))\cdot(x-x_0)+o(|x-x_0|^2).$$ Up to a rotation, $$D^2u(x_0)=\begin{pmatrix}\eta_1&0&0&\dots\\0&\eta_2&0&\dots\\0&0&\eta_3&\dots\\\vdots&\ddots\\0 &0&\dots&\eta_d\end{pmatrix}.$$
Then $\Shu(x_0)$ is comparable to $x_0+\sum(\frac{h}{\eta_j})^{1/2}e_j.$ Therefore, up to a dimensional constant, $A_h$ is comparable to $$\begin{pmatrix}(\frac{\eta_1}{h})^{1/2}&0&0&\dots\\0&(\frac{\eta_2}{h})^{1/2}&0&\dots\\0&0&(\frac{\eta_3}{h})^{1/2}&\dots\\\vdots&\ddots\\0 &0&\dots&(\frac{\eta_d}{h})^{1/2}\end{pmatrix}.$$
 
As a result, $T_h=hA_h^2$ is comparable $D^2u(x_0)$ up to a dimensional constant for small $h$. \end{proof}

\section{Proof of Theorem \ref{Result}}
In this final section of the paper, we give the proof of the main result. 

To simplify our notations, define $\kappa=\max\{C_0C_2,C_0^2\}$, where $C_0$ is the constant in Proposition \ref{uHessian} and $C_2$ is the constant in Proposition \ref{CorrectBehaviour}. In particular, $\kappa$ depends only on $d$ and $\Lambda.$

For each integer $m$, let's define $$D_m=\{x\in\Omega||D^2u(x)|\ge \kappa^m\}.$$ The $W^{2,1+\eps}$-estimate is a direct consequence of the following lemma concerning the decay of integrals over $D_m$:

\begin{lem}\label{LevelSetDecay}
Suppose $u$ is a solution to \eqref{GlobalBVP}. There is a constant $\tau\in(0,1)$, depending only on $d$ and $\Lambda$, such that    $$\int_{D_{m+1}}|D^2u|\le (1-\tau)\int_{D_m}|D^2u|$$for each $m\ge m_0$. 

Here $m_0$ is an integer depending on $d$, $\Lambda$, and the inner and outer radii of $\Omega$ and $\Omega^*$.
\end{lem}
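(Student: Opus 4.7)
The plan is a Calderón--Zygmund type argument: cover $D_{m+1}$ by centered sections whose associated ``good sets'' from Proposition~\ref{uHessian} are contained in $D_m\setminus D_{m+1}$, then use the bounded overlap from Lemma~\ref{CoveringLemma} to deduce
\[ \int_{D_{m+1}}|D^2u|\le C\int_{D_m\setminus D_{m+1}}|D^2u|, \]
which, upon rearranging $\int_{D_m\setminus D_{m+1}}=\int_{D_m}-\int_{D_{m+1}}$, gives the decay with $\tau=1/(1+C)$.

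The core step is selecting, for each $x\in D_{m+1}$, a height $h_x\in(0,1]$ so that the operator norm of $T_{h_x}=h_xA_{h_x}M_{h_x}$ lies in the window $[C_0\kappa^m,\kappa^{m+1}/C_0]$. The window is nondegenerate because $\kappa\ge C_0^2$. The purpose of this selection is that on the good set
\[ G_x=S_{\eta_0h_x}[u](x)\cap\Omega\cap\{C_0^{-1}T_{h_x}\le D^2u\le C_0T_{h_x}\} \]
the PSD matrix inequalities give
\[ \kappa^m\le C_0^{-1}\|T_{h_x}\|\le|D^2u|\le C_0\|T_{h_x}\|\le\kappa^{m+1}, \]
so $G_x\subset D_m\setminus D_{m+1}$. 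For existence of such $h_x$, apply the intermediate value theorem to the continuous map $h\mapsto\|T_h\|$. By Proposition~\ref{CorrectBehaviour} and $\kappa\ge C_0C_2$, for small $h$ one has $\|T_h\|\ge C_2^{-1}|D^2u(x)|\ge C_2^{-1}\kappa^{m+1}\ge C_0\kappa^m$, while $\|T_1\|\le C_1$. Choose $m_0$ so that $\kappa^{m_0+1}/C_0\ge C_1$ (this is where the dependence of $m_0$ on inner and outer radii enters, through $C_1$); then for $m\ge m_0$, $\|T_h\|$ starts above $C_0\kappa^m$ and ends at or below $\kappa^{m+1}/C_0$, so it must take a value inside the window at some $h_x\in(0,1]$.

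The family $\{S_{h_x}[u](x)\}_{x\in D_{m+1}}$ has heights bounded by $1$ and covers $D_{m+1}$, so Lemma~\ref{CoveringLemma} yields a countable subfamily $\{S_{h_j}[u](x_j)\}$ still covering $D_{m+1}$ whose $\eta_0$-contractions overlap at most $K$ times. Proposition~\ref{uHessian} applied section-by-section, together with this bounded overlap and the containment $\bigcup_jG_j\subset D_m\setminus D_{m+1}$, gives
\[ \int_{D_{m+1}}|D^2u|\le\sum_j\int_{S_{h_j}[u](x_j)\cap\Omega}|D^2u|\le C\sum_j\int_{G_j}|D^2u|\le CK\int_{\bigcup_jG_j}|D^2u|\le CK\int_{D_m\setminus D_{m+1}}|D^2u|, \]
and rearranging yields the claim with $\tau=1/(1+CK)$, which depends only on $d$ and $\Lambda$.

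The main obstacle is justifying the intermediate value argument, which rests on the continuity of $h\mapsto T_h$. This follows because centered sections $S_h[u](x)$ and the convex bodies $S_h[u](x)\cap\Omega$ vary continuously in Hausdorff distance with $h$, so their John ellipsoids and hence $A_h$ and $M_h$ do too. A minor technical point is the borderline equality $|D^2u|=\kappa^{m+1}$ on $G_x$: this is handled either by requiring $\|T_{h_x}\|$ to lie in the open window $(C_0\kappa^m,\kappa^{m+1}/C_0)$ (nonempty after a harmless enlargement of $\kappa$ by a universal constant) or by a slight perturbation of the threshold $\kappa^{m+1}$, neither of which changes the final form of the decay.
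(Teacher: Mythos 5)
Your proof is correct and follows essentially the same Calder\'on--Zygmund/covering route as the paper: select a height $h_x$ for each $x\in D_{m+1}$ so that $|T_{h_x}|$ lands in the window $[C_0\kappa^m,C_0^{-1}\kappa^{m+1}]$, which forces the good set from Proposition~\ref{uHessian} into $D_m\setminus D_{m+1}$, then use Lemma~\ref{CoveringLemma} for bounded overlap and rearrange. You are in fact slightly more careful than the paper at two points the paper glosses over: you justify the existence of $h_x$ via the intermediate value theorem (hence the continuity of $h\mapsto T_h$), and your condition $\kappa^{m_0+1}\ge C_0C_1$ is the one actually needed to guarantee $|T_1|\le C_1<C_0^{-1}\kappa^{m+1}$, whereas the paper writes the weaker-looking $C_0^{-1}\kappa^{m_0+1}\ge 1/C_1$.
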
 

\begin{proof}\textit{Step 1: Covering $D_{m+1}$ by sections with the correct height.}
For $x\in D_{m+1}$, $|D^2u(x)|\ge \kappa^{m+1}$. By Proposition \ref{CorrectBehaviour}, $|T_h|$ ranges from $1/C_1$ to $\kappa^{m+1}/C_2$ as $h$ changes from $1$ to $0$.  $T_h$ is the matrix defined before Proposition \ref{CorrectBehaviour}.

By our choice of $\kappa$, $C_0\kappa^{m}\le \kappa^{m+1}/C_2$. We can also choose $m_0$, depending also on the inner and outer radii of $\Omega$ and $\Omega^*$, such that $C_0^{-1}\kappa^{m+1}\ge C_0^{-1}\kappa^{m_0+1}\ge 1/C_1.$ 

Thus we can pick $h_x>0$ such that $$C_0\kappa^m\le |T_{h_x}|< C_0^{-1}\kappa^{m+1}.$$ 

Let $\mathcal{F}$ denote the family of sections corresponding to such choice of heights, namely, $\mathcal{F}=\{S_{h_x}[u](x)\}_{x\in D_{m+1}}.$ Then Lemma \ref{CoveringLemma} gives a countable subfamily $\{S_{h_j}[u](x_j)\}$ such that 
$$D_{m+1}\subset \cup S_{h_j}[u](x_j), \text{ and} \sum\chi_{S_{\eta_0h_j}[u](x_j)}\le K.$$

\textit{Step 2: Estimate in each section.} Let $S_{h}[u](x)$ denote a generic section in this subfamily.  Then Proposition \ref{uHessian} implies $$\int_{\Shu(x)\cap\Omega}|D^2u|\le C\int_G|D^2u|$$ where $G=S_{\eta_0h[u](x)}\cap\Omega\cap\{C_0^{-1}T_h\le D^2u\le C_0T_h\}.$ 

Now by our choice of $h$, $C_0\kappa^m\le |T_{h}|< C_0^{-1}\kappa^{m+1}.$ In particular, we have 
$$G\subset S_{\eta_0h[u](x)}\cap\Omega\cap\{\kappa^m\le |D^2u|< \kappa^{m+1}\}\subset S_{\eta_0h[u](x)}\cap (D_m\backslash D_{m+1}).$$

Hence the previous estimate leads to \begin{equation*}
\int_{\Shu(x)\cap\Omega}|D^2u|\le C\int_{S_{\eta_0h[u](x)}\cap (D_m\backslash D_{m+1})}|D^2u|.
\end{equation*} 

\textit{Step 3: The covering argument.} With this estimate and the two properties at the end of Step 1, we have the following 
\begin{align*}
\int_{D_{m+1}}|D^2u|&\le\sum\int_{S_{h_j}[u](x_j)\cap\Omega}|D^2u|\\&\le \sum C\int_{S_{\eta_0h_j[u](x_j)}\cap (D_m\backslash D_{m+1})}|D^2u|\\&=C\int_{D_m\backslash D_{m+1}}|D^2u|\sum\chi_{S_{\eta_0h_j}[u](x_j)}\\&\le CK\int_{D_m\backslash D_{m+1}}|D^2u|\\&=CK\int_{D_m}|D^2u|-CK\int_{D_{m+1}}|D^2u|.
\end{align*}
Consequently, $$\int_{D_{m+1}}|D^2u|\le \frac{CK}{1+CK}\int_{D_{m}}|D^2u|,$$where $C$ and $K$ are constants depending only on $d$ and $\Lambda.$
\end{proof}

Now Theorem \ref{Result} follows from a standard iteration:
\begin{proof}[Proof of Theorem \ref{Result}]
For $t>\kappa^{m_0}$, we find an integer $k$ such that $$\kappa^{m_0+k}\le t< \kappa^{m_0+k+1},$$ that is, $k\le \log_\kappa t-m_0<k+1.$

An iteration of Lemma \ref{LevelSetDecay} gives 
\begin{align*}\int_{\{|D^2u|\ge t\}\cap\Omega}|D^2u|&\le  \int_{D_{m_0+k}}|D^2u|\\&\le (1-\tau)^{k}\int_{D_{m_0}}|D^2u|\\&\le (1-\tau)^{-1-m_0}\cdot t^{\log_\kappa(1-\tau)}\cdot\int_{D_{m_0}}|D^2u|.
\end{align*}

Note that $$\int_{D_{m_0}}|D^2u|\le\int_{\Omega}\Delta u=\int_{\partial\Omega}\nabla u\cdot \nu\le C$$ for come $C$ depending only on $d$ and the inner and outer radii of $\Omega$ and $\Omega^*,$ the previous estimate gives
$$\int_{\{|D^2u|\ge t\}\cap\Omega}|D^2u|\le Ct^{-\eps_0},$$ where $\eps_0>0$ depends only on $d$ and $\Lambda$, and $C$ depends further on the inner and outer radii of $\Omega$ and $\Omega^*.$

By Markov's inequality, this gives $|\{|D^2u|\ge t\}\cap\Omega|\le Ct^{-1-\eps_0}$ for $t\ge \kappa^{m_0}$.

Therefore, we can pick $\eps\in(0,\eps_0)$ depending only on $d$ and $\Lambda$. Then it follows \begin{align*}
\int_\Omega|D^2u|^{1+\eps}&=\int_{D_{m_0}}|D^2u|^{1+\eps}+\int_{\{|D^2u|\le \kappa^{m_0}\}\cap\Omega}|D^2u|^{1+\eps}\\&\le C_\eps\int_{\kappa^{m_0}}^\infty t^{\eps}|\{|D^2u|\ge t\}\cap\Omega|dt+\kappa^{m_0(1+\eps)}|\Omega|\\&\le C\int_{\kappa^{m_0}}^\infty t^{-1-\eps_0+\eps}dt+\kappa^{m_0(1+\eps)}|\Omega|\\&=C\kappa^{m_0(\eps-\eps_0)}+\kappa^{m_0(1+\eps)}|\Omega|.
\end{align*}which is controlled by a constant depending on $d$, $\Lambda$ and the inner and outer radii of $\Omega$ and $\Omega^*$.
\end{proof}


\end{document}